\documentclass[a4paper,12pt]{amsart}
\usepackage[utf8]{inputenc}
\usepackage[T1]{fontenc}
\usepackage{url}
\usepackage{amssymb} 

\usepackage{hyperref}
\usepackage{mathtools}

\theoremstyle{plain}
\newtheorem{thm}{Theorem}[section]

\newtheorem{lem}[thm]{Lemma}

\theoremstyle{definition}

\theoremstyle{remark}
\newtheorem{rem}[thm]{Remark}

\numberwithin{equation}{section}

\DeclareMathOperator{\Lip}{Lip}

\DeclareMathOperator{\N}{\mathbb{N}}
\DeclareMathOperator{\image}{Im}

\title[Attaining diameter two properties in Lipschitz-free spaces]{A note on attaining diameter two properties in Lipschitz-free spaces}

\author{Jaan Kristjan Kaasik}

\address{Institute of Mathematics and Statistics, University of Tartu, Narva mnt 18, 51009, Tartu, Estonia}%

\email{jaan.kristjan.kaasik@ut.ee}

\subjclass{Primary 46B04; Secondary 46B20}%
\keywords{Lipschitz-free space, length metric space, attaining strong diameter 2 property}%

\date{}

\begin{document}

\begin{abstract}
We prove that in Lipschitz-free spaces the strong diameter two property, the diameter two property, and the local diameter two property coincide with their corresponding attaining variants.
\end{abstract}

\maketitle
\section{Introduction}

Let $M$ be a metric space with a distinguished point $0\in M$. We denote by $\Lip_0(M)$ the Banach space of all real-valued Lipschitz functions vanishing at $0$, equipped with the norm
\[
\|f\|=\sup\left\{\frac{|f(p)-f(q)|}{d(p,q)}\colon p,q\in M, p\neq q\right\}.
\]
Let $\delta:M\rightarrow \Lip_0(M)^*$ denote the canonical embedding, defined by 
\[
    \langle f, \delta(p)\rangle=f(p),
\qquad p\in M,\ f\in \Lip_0(M).\]
It is well known that $\operatorname{Lip}_0(M)$ is a dual space whose canonical predual is \emph{the Lipschitz-free space $\mathcal{F}(M)$} defined by
\[
\mathcal F(M)\coloneqq \overline{\operatorname{span}}\,\delta(M)\subset \operatorname{Lip}_0(M)^\ast.
\] 
For $p,q\in M$ with $p\neq q$, 
we denote by 
\[m_{p,q}=\frac{\delta(p)-\delta(q)}{d(p,q)}\] 
a norm one elementary molecule in $\mathcal{F}(M)$.
For background, we refer to \cite{MR2030906} and \cite{MR3792558}.

In these spaces we are interested in the following properties. We say that a Banach space $X$ has the 
\emph{Daugavet property }, if
    for every $x\in S_X$, every slice $S$ of $B_X$, and every $\varepsilon>0$, there exists $y\in S$ such that 
    \[\|x-y\|\geq 2-\varepsilon.\]
Following \cite{MR3098474}, a Banach space $X$ has \emph{the local diameter two property (LD$2$P), the diameter two property (D$2$P), or the strong diameter two property (SD$2$P)} if  every slice of $B_X$, every nonempty weakly open subset of $B_X$, or every convex combination of slices of $B_X$, has diameter two, respectively.
In general,
\[\text{Daugavet property}\Rightarrow \text{SD2P}\Rightarrow \text{D2P}\Rightarrow \text{LD2P},\] and the implications are strict.

One may also consider the corresponding \textit{attaining} variants (ALD$2$P, AD$2$P, and ASD$2$P), where the diameter two is required to be attained. Analogous implications hold for these properties and they are strictly stronger than their non-attaining counterparts (see  \cite{MR4401984} and \cite{MR4787365}).

In Lipschitz-free space, diameter two properties are remarkably rigid. It follows from \cite{{MR3912824},MR3794100,MR4522807,MR2379289} that for a complete metric space $M$, the following assertions are equivalent:

\begin{itemize}
        \item $M$ is a length space;
        \item $\mathcal F(M)$ has the Daugavet property;
        \item $\mathcal F(M)$ has the SD$2$P;
        \item $\mathcal F(M)$ has the D$2$P;
        \item $\mathcal F(M)$ has the LD$2$P;
        \item $\mathcal F(M)$ is locally almost square;
        \item the unit ball of $\mathcal F(M)$ does not have strongly exposed points.
\end{itemize}
It is therefore natural to ask whether the attaining variants also fall under the same characterisation. We answer this affirmatively.

\begin{thm}\label{thm:main}
    Let $M$ be a complete metric space. The following assertions are equivalent:
    \begin{enumerate}
        \item $M$ is a length space;
        \item $\mathcal{F}(M)$ has the Daugavet property;
        \item $\mathcal F(M)$ has the SD$2$P;
        \item $\mathcal F(M)$ has the D$2$P;
        \item $\mathcal F(M)$ has the LD$2$P;
        \item $\mathcal F(M)$ has the ASD$2$P;
        \item $\mathcal F(M)$ has the AD$2$P;
        \item $\mathcal F(M)$ has the ALD$2$P;
    \end{enumerate}
\end{thm}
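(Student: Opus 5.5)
Since (1)--(5) are already known to be equivalent by the results cited in the introduction, and trivially (6)$\Rightarrow$(7)$\Rightarrow$(8)$\Rightarrow$(5) (attaining variants imply the non-attaining ones, and ASD2P$\Rightarrow$AD2P$\Rightarrow$ALD2P as in \cite{MR4401984,MR4787365}), it suffices to prove (1)$\Rightarrow$(6). So we assume $M$ is a complete length space. Fix slices $S_1,\dots,S_n$ of $B_{\mathcal F(M)}$ and weights $\lambda_i>0$ with $\sum\lambda_i=1$. We aim to find $x_i,y_i\in S_i$ with $\bigl\|\sum\lambda_i(x_i-y_i)\bigr\|=2$ exactly.

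First, since $B_{\mathcal F(M)}$ is the closed convex hull of the molecules, each slice $S_i=\{\mu\in B_{\mathcal F(M)}\colon f_i(\mu)>1-\alpha_i\}$, with $\|f_i\|=1$, contains a molecule $m_{p_i,q_i}$. Second, by the length-space hypothesis (in a complete length space, for $p\neq q$ and $\varepsilon>0$ there is an approximate midpoint), we find inside each such slice many molecules $m_{u,v}$ with $d(u,v)$ arbitrarily small. Concretely, we split an almost-geodesic from $q_i$ to $p_i$ into short pieces; since $f_i(m_{p_i,q_i})$ is an average of the $f_i$-slopes along the pieces, some short piece $m_{u_i,v_i}$ has slope at least $f_i(m_{p_i,q_i})-\eta$ and so still lies in $S_i$. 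Iterating this inside the piece gives molecules $m_{u_i^k,v_i^k}\in S_i$ whose supports shrink and accumulate at points chosen by the construction.

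The key attaining step is as follows. We want $x_i=m_{u_i,v_i}$ and $y_i=m_{v_i',u_i'}$ (a reversed molecule also lying in $S_i$), and a single norm-one $g\in\Lip_0(M)$ with $g(x_i)=1$ and $g(y_i)=-1$ for all $i$. Then $\langle g,\sum\lambda_i(x_i-y_i)\rangle=2$, which gives norm exactly $2$. To obtain $y_i$, we use that in a length space each slice also contains molecules ``pointing the other way'' on tiny scales far from $u_i,v_i$. This is the Daugavet-type localisation: we modify $f_i$ near a point by a tent function so that the slope at tiny scale is reversed without decreasing the functional much. For $g$, we choose all $2n$ pairs with pairwise separated small supports, with $d(u_i,v_i)$ very small relative to the mutual distances. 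We then set $g$ equal to $d(\cdot,v_i)$ near each $u_i,v_i$ (a tent of slope $1$) and to $-d(\cdot,u_i')$-type tents for the reversed pairs, and extend it by McShane. Because the supports are tiny and well separated, the tents are compatible with Lipschitz constant $1$. Each molecule is then exactly normed by $g$, since $g(u)-g(v)=d(u,v)$ holds on each pair.

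The main obstacle is ensuring that the exactly normed molecules actually lie in the prescribed slices while keeping the tent construction $1$-Lipschitz. If an exact slope-$1$ pair is hard to get, the fallback is not to require $g(m_{u,v})=1$ from an arbitrary molecule. Instead, we pick pairs $(u,v)$ with $d(u,v)=\rho$ and then define $g$ to be an exact tent, so $g(m_{u,v})=1$ automatically; membership in $S_i$ comes from the slope-averaging step, and the opposite sign from choosing the reversed orientation of another short pair in the same slice. I expect the delicate part to be producing, inside each slice, two disjoint short pairs with opposite orientation relative to $g$ yet both in $S_i$. I would handle this with the ``slice contains molecules with arbitrarily small support near any chosen point of the approximate geodesic'' argument, combined with the fact that $S_i$ is open, so small perturbations of the pair stay in $S_i$.
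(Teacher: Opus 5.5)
Your overall plan matches the paper's: reduce to (1)$\Rightarrow$(6), use molecules coming from sub-arcs of almost-geodesics, and build one norm-one $g$ from tents that takes the values $\pm1$ exactly. However, the step that actually carries the proof is missing, and the step you flag as delicate is not where the difficulty lies.

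Start with the sign. Obtaining a ``reversed'' molecule by modifying $f_i$ near a point cannot work, because $S_i$ is fixed by $f_i$. The sign comes from $g$ alone. Suppose $m_{a,b},m_{a',b'}\in S_i$ have well-separated supports. A tent of height $d(a,b)$ peaked at $a$ and a tent of height $d(a',b')$ peaked at $b'$ give $\langle g,m_{a,b}\rangle=1$ and $\langle g,m_{a',b'}\rangle=-1$. This is exactly what the paper does: $m_{x_i,z_i}$ and $m_{w_i,y_i}$ are both forward sub-arcs of $\gamma_i$, with tents at $x_i$ and at $y_i$. Your fallback is close to this.

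The genuine gap is localisation. For each $i$ you need two molecules in $S_i$ at prescribed places. They must be separated from each other and from those chosen for the other slices, with supports comparable to the separations.

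Your averaging step does not give this. It yields only \emph{one} good piece, at an uncontrolled location. Iterating inside it nests further pieces in the same region, so you get neither two separated pairs nor pairs avoiding the other slices' points.

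Your claim of ``arbitrarily small support near any chosen point'' is also false as stated. On $M=[0,1]$, let $f'=0$ on a tiny interval $I$ and $f'=1$ elsewhere. Then $\langle f,m_{1,0}\rangle=1-|I|$, yet every molecule supported in $I$ is annihilated by $f$.

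What is needed is the paper's Lemma~2.1. All slopes of $f$ are at most $1$, so splitting $\delta(x)-\delta(y)$ at $u$ and $v$ gives
$d(u,v)\bigl(1-\langle f,m_{u,v}\rangle\bigr)\le L(\gamma)-\langle f,\delta(x)-\delta(y)\rangle<L(\gamma)-(1-\varepsilon)d(x,y)$
for \emph{every} sub-arc of $\gamma$ from $u$ to $v$. Hence every sub-arc of length at least $r$ lies in the slice once $r$ is large compared with the excess $L(\gamma)-(1-\varepsilon)d(x,y)$. This ties the admissible scale to the excess; the paper takes $\varepsilon=\alpha/(18n)$ and $r_i=d(u_i,v_i)/(9n)$.

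Separation across slices also needs an argument, which you do not give. Order $r_1\le\dots\le r_n$. The connected set $\mathrm{Im}\,\gamma_i$ contains $u_i,v_i$ with $d(u_i,v_i)=9nr_i$, so it cannot be covered by at most $2n$ balls of radius $2r_i$. Therefore the new points can be taken at distance at least $2r_i$ from all previously chosen points. The earlier tents have radii $r_j\le r_i$, so they do not interfere.
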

The equivalence of $(1)$-$(5)$ is known from previously cited results. Thus, it remains to show that if $M$ is a length space, then $\mathcal{F}(M)$ has the ASD$2$P. We prove this in Theorem \ref{thm:ASD2P_main}. 

It is an open question whether every Banach space with the Daugavet property also has the ASD$2$P (see \cite[Question 6.10]{MR4401984}). Theorem \ref{thm:main} provides a partial positive answer to this question for Lipschitz-free spaces.

One may also consider other attaining variants of related properties, in which the diameter is required to be attained. One such property is the \emph{perfect Daugavet property}, requiring that the distance two in the definition of the Daugavet property is attained (see \cite[page 212]{10481/104200}). It would be interesting to know whether there exists a Lipschitz-free space with this property. 

We note that a separable example cannot exist, since separable Lipschitz-free spaces already lack the attaining diametral local diameter two property (see \cite{MR3818544} for background on diametral diameter two properties). Indeed, if $M$ is separable and $\{p_n:n\in \N\}\subset M\setminus \{0\}$ is dense in $M$, then the element 
\[\mu=\sum_{n=1}^\infty 2^{-n}m_{p_n,0}\in S_{\mathcal{F}(M)}\]
admits a unique norming functional $d(\cdot,0)$. Consider the slice $S$ determined by the functional $d(\cdot,0)$ and $1$. If $\mathcal{F}(M)$ would have the perfect Daugavet property, then there exist $\nu\in S$ and $f\in S_{\Lip_0(M)}$ so that $\langle f,\mu \rangle=1$ and $\langle f,\nu \rangle=-1$. But it must hold that $f(\cdot)=d(\cdot,0)$, which contradicts $\nu\in S$.

Analogous questions arise for spaces of Lipschitz functions. As these are dual Banach spaces, one can also consider weak$^*$-slices and weak$^*$-open sets, and define the corresponding weak$^*$-diameter two properties.  Previous work in spaces of Lipschitz functions has provided characterisations of weak$^*$-diameter two properties  (see \cite{HALLER2026130178}, \cite{MR4026495}, \cite{MR3803112}) and established necessary conditions for diameter two properties (see \cite{MR4849357}). To the best of our knowledge, the attaining variants in these spaces have not been investigated.

\subsection*{Notation}  We only consider real Banach spaces. For a Banach space $X$ we denote the closed unit ball by $B_X$, the unit sphere by $S_X$, and the dual space by $X^*$. Given $x^*\in S_{X^*}$ and $\alpha>0$, the slice of $B_X$, determined by $x^*$ and $\alpha$, is 
\[
S(x^*,\alpha)=\{x\in B_X: x^*(x)>1-\alpha\}.
\]

Let $M$ be a metric space. A continuous mapping $\gamma\colon[0,1]\rightarrow M$ is called a path joining $\gamma(0)$ and $\gamma(1)$. Its length is defined by
\[
L(\gamma) = \sup\left\{\sum_{i=0}^{n-1} d\big(\gamma(t_i), \gamma(t_{i+1})\big) \colon n\in \mathbb{N},\, 0=t_0\leq \dotsb\leq t_n=1\right\}.
\]
The metric space $M$ is a length space if for every $p,q\in M$, 
\[
d(p,q)=\inf\{L(\gamma):\gamma \text{ is a path joining } p \text{ and }q\}.\]

Given $p$ in $M$ and $r>0$, we denote by $B(p,r)$ the open ball in $M$ centred at $p$ of radius $r$.

\section{Proof of main theorem}
We begin with an auxiliary lemma.
\begin{lem}\label{lemma:molecule_in_slice}Let $M$ be a length space, let $f\in S_{\Lip_0(M)}$, let $\varepsilon>0$, let $r>0$, and let $x,y\in M$ with $x\neq y$ satisfy $\langle f,m_{x,y}\rangle>1-\varepsilon$ and $r<d(x,y)$. For every $\gamma:[0,1]\rightarrow M$ with $\gamma(0)=x$ and $\gamma(1)=y$, we have 
\[
\langle f,m_{u,v}\rangle
>1-\frac{L(\gamma)-(1-\varepsilon)d(x,y)}{r}
\]
for any $u,v\in \image \gamma$ satisfying $d(u,v)\geq r$ and $d(x,u)<d(x,v)$.
\end{lem}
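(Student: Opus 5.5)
The plan is to reduce the claim to a single additive estimate on $f(u)-f(v)$ and then obtain that estimate by splitting $f(x)-f(y)$ along the path $\gamma$. The reduction goes as follows. Write $\Delta=L(\gamma)-(1-\varepsilon)d(x,y)$, and note that $\Delta>0$ because $L(\gamma)\geq d(x,y)$. Since $d(u,v)\geq r$, it suffices to show
\[
d(u,v)-\bigl(f(u)-f(v)\bigr)<\Delta .
\]
Dividing by $d(u,v)\geq r$ then gives $1-\langle f,m_{u,v}\rangle<\Delta/d(u,v)\leq \Delta/r$, which is the claimed bound. The hypothesis $\langle f,m_{x,y}\rangle>1-\varepsilon$ will be used only in the form $f(x)-f(y)>(1-\varepsilon)d(x,y)$. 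I expect the length-space assumption on $M$ not to be needed, since the path $\gamma$ is given.

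Write $u=\gamma(s)$ and $v=\gamma(t)$, and let $L_{[a,b]}$ denote the length of $\gamma|_{[a,b]}$. Lengths are additive over consecutive subintervals, and $d(\gamma(a),\gamma(b))\leq L_{[a,b]}$.

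The main case is $s<t$, where $u$ comes before $v$ along $\gamma$. Split the increment as
\[
f(x)-f(y)=\bigl(f(x)-f(u)\bigr)+\bigl(f(u)-f(v)\bigr)+\bigl(f(v)-f(y)\bigr).
\]
Since $\|f\|=1$, this is at most $L_{[0,s]}+\bigl(f(u)-f(v)\bigr)+L_{[t,1]}$. By additivity of length this equals $L(\gamma)-L_{[s,t]}+f(u)-f(v)$, which is at most $L(\gamma)-d(u,v)+f(u)-f(v)$. Combining this with $f(x)-f(y)>(1-\varepsilon)d(x,y)$ gives exactly the required inequality $d(u,v)-(f(u)-f(v))<\Delta$.

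The step I expect to be the main obstacle is the remaining case $t<s$, where the path visits $v$ before $u$. This is the only place the hypothesis $d(x,u)<d(x,v)$ can enter. I would first try the same splitting, using $f(x)-f(u)\leq d(x,u)<d(x,v)\leq L_{[0,t]}$ and $f(v)-f(y)\leq d(v,y)$. I would then bound $d(v,y)$ either by $L_{[t,1]}$ or by $d(v,u)+L_{[s,1]}$. Neither bound produces the needed term $-d(u,v)$ on its own: the second yields the required estimate only when $2d(u,v)\leq L_{[t,s]}$, that is, when $\gamma$ wanders enough between $v$ and $u$.

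So if the naive chain fails, I would try two things. The first is to show that this case can be avoided: using continuity of $d(x,\gamma(\cdot))$ and the condition $d(x,u)<d(x,v)$, I would look for points on $\gamma$ that realise the situation $s<t$, for instance the last time before $s$ at which $d(x,\gamma(\cdot))=d(x,u)$. The second, failing that, is to find a genuine counterexample on the real line with a backtracking path. Such an example would show that the lemma is intended for the ordering $s<t$, to which the first case already applies.
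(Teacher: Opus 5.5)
Your case $s<t$ is correct and is exactly the paper's proof. The paper splits $\delta(x)-\delta(y)$ through $u$ and $v$ and bounds the two outer molecules by $1$. It then invokes $d(x,u)+d(v,y)\le L(\gamma)-d(u,v)$, which is precisely what your step $L_{[0,s]}+L_{[t,1]}=L(\gamma)-L_{[s,t]}\le L(\gamma)-d(u,v)$ establishes. The paper gives no justification for that inequality and makes no case distinction. It never uses $d(x,u)<d(x,v)$, nor, as you noticed, that $M$ is a length space.

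The case $t<s$ is where the statement itself fails, so your second fallback is the right one. Here is a counterexample:
\begin{itemize}
\item $M=\mathbb{R}$ with base point $0$, and $f(p)=p$;
\item $x=0$, $y=-10$, $\varepsilon=\frac{1}{10}$, $r=3$;
\item $\gamma$ the piecewise linear path $0\to 2\to -1\to -10$, so $L(\gamma)=14$;
\item $u=-1$ and $v=2$.
\end{itemize}
All hypotheses hold: $\langle f,m_{x,y}\rangle=1>1-\varepsilon$, $r<10=d(x,y)$, $d(u,v)=3\ge r$, and $d(x,u)=1<2=d(x,v)$. Yet $\langle f,m_{u,v}\rangle=-1$, whereas the claimed bound is $1-\frac{14-9}{3}=-\frac{2}{3}$. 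Here $\gamma$ visits $u$ only after $v$, and the paper's inequality would read $13\le 11$. So the provable statement is your main case, that is, the lemma with ``$u$ precedes $v$ along $\gamma$'' in place of $d(x,u)<d(x,v)$.

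Your first fallback cannot save the literal statement, because moving $u$ to another point of $\gamma$ tells you nothing about $f(u)-f(v)$. What the hypothesis $d(x,u)<d(x,v)$ does give is the following. Suppose no visit to $u$ precedes a visit to $v$. Then $L(\gamma)\ge d(x,v)+d(v,u)+d(u,y)>d(x,y)+d(u,v)$, so $\Delta>d(u,v)\ge r$ and the asserted bound is negative. Hence the lemma is true as stated whenever $\Delta\le r$, i.e.\ whenever its right-hand side is nonnegative. For Theorem~\ref{thm:ASD2P_main} the flaw is harmless. Choosing $z_i$ after $x_i$ and $w_i$ before $y_i$ along $\gamma_i$, which the intermediate value theorem allows, puts every application into your main case.
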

\begin{proof}
The proof follows from a decomposition argument of 
$\delta(x)-\delta(y)$ along the path $\gamma$. Fix $u,v\in \image \gamma$ satisfying $d(u,v)\geq r$ and $d(x,u)<d(x,v)$.
Since
\[
\delta(x)-\delta(y)
=\big(\delta(x)-\delta(u)\big)+\big(\delta(u)-\delta(v)\big)+\big(\delta(v)-\delta(y)\big),
\]
we have
\begin{align*}
\langle f,m_{x,y}\rangle
&=\frac{\langle f,\delta(x)-\delta(y)\rangle}{d(x,y)}\\
&=\frac{\langle f,\delta(x)-\delta(u)\rangle}{d(x,y)}
 +\frac{\langle f,\delta(u)-\delta(v)\rangle}{d(x,y)}
 +\frac{\langle f,\delta(v)-\delta(y)\rangle}{d(x,y)}\\
&=\frac{d(x,u)}{d(x,y)}\,\langle f,m_{x,u}\rangle
 +\frac{d(u,v)}{d(x,y)}\,\langle f,m_{u,v}\rangle
 +\frac{d(v,y)}{d(x,y)}\,\langle f,m_{v,y}\rangle .
\end{align*}
Since $\|f\|=1$, we have $\langle f,m_{x,u}\rangle\le 1$ and
$\langle f,m_{v,y}\rangle\le 1$. Therefore,
\[
1-\varepsilon
<\langle f,m_{x,y}\rangle
\le
\frac{d(x,u)}{d(x,y)}
+\frac{d(u,v)}{d(x,y)}\,\langle f,m_{u,v}\rangle
+\frac{d(v,y)}{d(x,y)}.
\]
Rearranging gives
\[
\langle f,m_{u,v}\rangle
>
(1-\varepsilon)\frac{d(x,y)}{d(u,v)}
-\frac{d(x,u)+d(v,y)}{d(u,v)}.
\]
Finally, because $d(u,v)\ge r$ and
\[
d(x,u)+d(v,y)
\le L(\gamma)-d(u,v),
\]
we obtain
\[
\langle f,m_{u,v}\rangle
>
1-\frac{L(\gamma)-(1-\varepsilon)d(x,y)}{d(u,v)}
\ge
1-\frac{L(\gamma)-(1-\varepsilon)d(x,y)}{r}.
\]
\end{proof}

We are now ready to prove the main result of this note.

\begin{thm}\label{thm:ASD2P_main}
    Let $M$ be a length space. Then $\mathcal{F}(M)$ has the ASD$2$P. 
\end{thm}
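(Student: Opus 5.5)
We prove the ASD$2$P directly. Recall that the ASD$2$P means: for every convex combination $C=\sum_{i=1}^n\lambda_i S_i$ of slices $S_i=S(f_i,\alpha_i)$ of $B_{\mathcal F(M)}$, there exist $\mu,\nu\in C$ with $\|\mu-\nu\|=2$.

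We choose $\mu=\sum_i\lambda_i m_{u_i,v_i}$ and $\nu=\sum_i\lambda_i m_{v_i,u_i}$, with each $m_{u_i,v_i}$ and $m_{v_i,u_i}$ lying in $S_i$. Then $\mu-\nu=2\sum_i\lambda_i m_{u_i,v_i}$, so it suffices to find a single $g\in S_{\Lip_0(M)}$ with $\langle g,m_{u_i,v_i}\rangle=1$ for all $i$. Since $\|\mu-\nu\|\le 2$ trivially, this gives $\|\mu-\nu\|=2$.

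\textbf{Step 1: short pairs on which $f_i$ is almost $1$.} Since the molecules norm $\Lip_0(M)$, for each $i$ we pick $x_i\neq y_i$ with $\langle f_i,m_{x_i,y_i}\rangle>1-\varepsilon_i$, where $\varepsilon_i\ll\alpha_i$. Because $M$ is a length space, we pick a path $\gamma_i$ from $x_i$ to $y_i$ with $L(\gamma_i)<(1+\varepsilon_i)d(x_i,y_i)$. By continuity along $\gamma_i$ we find points $u_i,v_i\in\image\gamma_i$ with $d(x_i,u_i)<d(x_i,v_i)$ and $d(u_i,v_i)=r$, for any prescribed $r<d(x_i,y_i)$ (for instance $r$ a fixed fraction of $d(x_i,y_i)$). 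Lemma \ref{lemma:molecule_in_slice} then gives $\langle f_i,m_{u_i,v_i}\rangle>1-2\varepsilon_i d(x_i,y_i)/r$, which exceeds $1-\alpha_i$ if $r\ge d(x_i,y_i)/2$ and $\varepsilon_i<\alpha_i/4$.

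For $\nu$ we need $m_{v_i,u_i}\in S_i$ as well, that is, $f_i$ must also be large on the reversed molecule. This is impossible for the same $f_i$. So instead we choose two different pairs: $(u_i,v_i)$ with $m_{u_i,v_i}\in S_i$ for $\mu$, and $(u_i',v_i')$ with $m_{u_i',v_i'}\in S_i$ for $\nu$. We then require a norm-one $g$ with $g=1$ on all $m_{u_i,v_i}$ and $g=-1$ on all $m_{u_i',v_i'}$. This is possible exactly when no chain inequality is violated, in particular when the pairs $\{u_i,v_i\}$ and $\{u'_j,v'_j\}$ are mutually far apart relative to the lengths $d(u_i,v_i)$.

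\textbf{Step 2: making the pairs small and separated.} We use the length structure to shrink all pairs. By subdividing $\gamma_i$ into many pieces of length about $r$, the same averaging argument as in Lemma \ref{lemma:molecule_in_slice} shows that $f_i$ is almost $1$ on many disjoint consecutive subarcs of $\gamma_i$. Indeed, the deficits sum to at most $\varepsilon_i d(x_i,y_i)+(L(\gamma_i)-d(x_i,y_i))$, so most subarcs are good. We pick $2n$ good subarcs across all $i$, with tiny length $r$ and pairwise distances much larger than $r$. This can be done greedily, since each $\gamma_i$ has diameter at least $d(x_i,y_i)$ and therefore contains points well separated from any finite set of previously chosen small balls. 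Here $\lambda$-weights play no role.

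We then define $g$ on the finite set $A=\bigcup_i\{u_i,v_i,u_i',v_i'\}\cup\{0\}$ by $g(v_i)=g(u_i)-r$, $g(v_i')=g(u_i')+r$, and all the $g(u_i)$, $g(u'_i)$ equal to $0$. This is $1$-Lipschitz on $A$ provided the cross distances are at least $2r$ and the points avoid $B(0,2r)$, which we arrange in the greedy choice. We extend $g$ by McShane to all of $M$, and fix the sign convention $m_{v,u}$ versus $m_{u,v}$ accordingly.

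\textbf{Main obstacle.} The main obstacle is Step 2: ensuring that small, well-separated good subarcs exist for all $i$ simultaneously. The points $x_i,y_i$ may cluster, for example when all the slices are defined by the same $f$. The plan is to first fix $r$ tiny compared with all $d(x_i,y_i)$. Then each $\gamma_i$ has on the order of $d(x_i,y_i)/r$ good subarcs, spread out along the path with the spread certified by $d(x_i,\cdot)$ increasing. Only finitely many forbidden balls of radius $O(r)$ must be avoided, and these can eliminate at most $O(n)$ subarcs.
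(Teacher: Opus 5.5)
Your final construction is correct, and its core is the paper's. For each slice you take two molecules running forward along the same almost-geodesic $\gamma_i$, so both lie in $S_i$; the first attempt with $m_{v_i,u_i}$ is rightly discarded. You keep the relevant points $2r$-separated and use a $1$-Lipschitz function equal to $1$ on the $\mu$-molecules and $-1$ on the $\nu$-molecules.

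The difference is in how good molecules are located. The paper works at the coarse, index-dependent scale $r_i=d(u_i,v_i)/(9n)$ with $\varepsilon=\alpha/(18n)$. At that scale Lemma~\ref{lemma:molecule_in_slice} puts \emph{every} forward pair on $\gamma_i$ at distance at least $r_i$ into $S_i$, so it only has to avoid fewer than $2n$ balls of radius $2r_i$, which a connectedness count does. The price is the ordering $r_1\le\dots\le r_n$, separation $2r_{\max\{i,j\}}$, points $z_i,w_i$ at exact distance $r_i$, and an explicit max-of-cones function. You work at one tiny common scale $r$, where the lemma no longer applies. You replace it by a pigeonhole over a subdivision of $\gamma_i$, using that the total deficit is below $\varepsilon_i d(x_i,y_i)+L(\gamma_i)-d(x_i,y_i)$. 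In return, $g$ is just a three-valued function on a finite set extended by McShane.

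Two details need repair.

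First, if your subarcs have length $r$, then $d(u_i,v_i)<r$ is possible and $g(v_i)=g(u_i)-r$ is not $1$-Lipschitz. Put $g(v_i)=-d(u_i,v_i)$ and $g(v_i')=d(u_i',v_i')$ instead.

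Second, and more seriously, the claim that the forbidden balls eliminate at most $O(n)$ subarcs is false with your order of choices. The path $\gamma_i$ is fixed before $r$, and its excess $E_i=L(\gamma_i)-d(x_i,y_i)$ can be of order $\varepsilon_i d(x_i,y_i)\gg r$. You only know $d(x_i,\gamma_i(t))\ge L(\gamma_i|_{[0,t]})-E_i$, so a single ball of radius $2r$ can meet on the order of $E_i/r$ subarcs. There are two easy fixes:
\begin{itemize}
\item Choose $\gamma_i$ after $r$ with $E_i<r$. Then $\gamma_i(s),\gamma_i(t)\in B(q,2r)$ with $s<t$ forces $L(\gamma_i|_{[s,t]})\le E_i+d(\gamma_i(s),\gamma_i(t))<5r$, so each ball meets at most $7$ subarcs and your count goes through.
\item Keep $\gamma_i$ and take $\varepsilon_i$ small relative to $1/n$ as well as to $\alpha_i$. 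Then the $O(n)$ balls still remove only a small fraction of the roughly $d(x_i,y_i)/r$ subarcs.
\end{itemize}
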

\begin{proof}
Assume $M$ is a length space. 
    Let $n\in \N$, $\lambda_1,\ldots,\lambda_n>0$ with $\sum_i\lambda_i=1$,
    let $f_1,\ldots,f_n\in S_{\Lip_0(M)}$, and let $\alpha>0$. For each $i\in \{1,\ldots,n\}$ consider the slice $S_i=S(f_i,\alpha)$. It suffices to find $\mu,\nu\in \sum_{i=1}^n \lambda_iS_i$ so that $\|\mu-\nu\|=2$.

Fix $i\in \{1,\ldots,n\}$ and let 
\[\varepsilon=\frac{\alpha}{18n}.\]  
Find $u_i,v_i\in M$ with $u_i\neq v_i$ such that
$\langle f_i,m_{u_i,v_i}\rangle>1-\varepsilon$. Denote
\[
r_i=\frac{d(u_i,v_i)}{9n}
\]
and assume without loss of generality that $r_1\leq\ldots\leq r_n$. 
Since $M$ is length, we choose a path $\gamma_i:[0,1]\to M$ joining $u_i$ to $v_i$
such that
\[L(\gamma_i)<(1+\varepsilon)d(u_i,v_i)=\alpha r_i+(1-\varepsilon)d(u_i,v_i),\]
implying
\begin{equation}
\frac{L(\gamma_i)-(1-\varepsilon)d(u_i,v_i)}{r_i}<\alpha.
\end{equation}

Next, we inductively construct points $x_i,y_i\in\mathrm{Im}\,\gamma_i$
so that
\begin{equation}
\min\{d(x_i,x_j),d(x_i,y_j),d(y_i,y_j):\, i>j\}\ge 2r_i
\end{equation}

and

\begin{equation}\tag{2.3}
\min\{d(x_i,v_i),d(y_i,u_i),d(x_i,y_i):\, i=1,\dots,n\}\ge 2r.
\end{equation}

For $i=1$ we set $x_1=u_1$ and $y_1=v_1$.

Let $i\in \{2,\ldots,n\}$ and assume that we have defined points $x_j,y_j\in M$ for every $j<i$. 
To choose $x_i$, we consider the set
\[
F_i=\{x_1,y_1,\dots,x_{i-1},y_{i-1},v_i\}.
\]
We claim that
\[
\mathrm{Im}\,\gamma_i
\setminus\bigcup_{p\in F_i} B(p,2r_i)
\neq\emptyset,
\]
Indeed, if $\mathrm{Im}\,\gamma_i\subset \bigcup_{p\in F_i}B(p,2r_i)$, then
\begin{align*}
 L(\gamma_i)
\le \sum_{p\in F_i}\mathrm{diam}(B(p,2r_i))
\le |F_i|\cdot 4r_i
\le
8nr_i
= \frac89 d(u_i,v_i),
\end{align*}
contradicting $L(\gamma_i)\geq d(u_i,v_i)$.
Thus we may choose $x_i\in\mathrm{Im}\,\gamma_i$
with $d(x_i,F_i)\ge 2r_i$.

Next we define
\[
G_i=\{x_1,y_1,\dots,x_{i-1},y_{i-1},x_i,u_i\}.
\]
The same argument shows that
\[
\mathrm{Im}\,\gamma_i
\setminus\bigcup_{p\in G_i} B(p,2r_i)
\neq\emptyset,
\]
so we may choose $y_i\in\mathrm{Im}\,\gamma_i$
with $d(y_i,G_i)\ge 2r_i$.
This completes the induction and establishes (2.2) and (2.3).

Fix $i\in \{1,\ldots,n\}$. 
By $(2.3)$, we can choose points $z_i,w_i\in\mathrm{Im}\,\gamma_i$
such that
\[
d(x_i,z_i)=r_i,
\qquad
d(z_i,v_i)<d(x_i,v_i),
\]
and
\[
d(y_i,w_i)=r_i,\qquad d(w_i,u_i)<d(y_i,u_i).\]
Applying Lemma~2.1 to the path $\gamma_i$
(with $x=u_i$, $y=v_i$, and $r_i$ as above),
and using (2.1), we obtain
\[
\langle f_i,m_{x_i,z_i}\rangle>1-\alpha
\quad\text{and}\quad
\langle f_i,m_{w_i,y_i}\rangle>1-\alpha.
\]
Hence $m_{x_i,z_i},\, m_{w_i,y_i}\in S_i.$

Now we are ready to define
\[
\mu=\sum_{i=1}^n\lambda_i m_{x_i,z_i}
\quad\text{and}\quad
\nu=\sum_{i=1}^n\lambda_i m_{w_i,y_i}.
\]
Then $\mu,\nu\in\sum_{i=1}^n\lambda_i S_i$.

Define $g:M\to\mathbb R$ by
\[
g(p)=\max\bigl\{ r_i-d(x_i,p),\ r_i-d(y_i,p): i=1,\dots,n\bigr\}.
\]
Each function $p\mapsto r_i-d(x_i,p)$ and $p\mapsto r_i-d(y_i,p)$
is $1$-Lipschitz, hence so is $g$. Set $\tilde g(p)=g(p)-g(0)$. Then 
$\tilde g\in S_{\operatorname{Lip}_0(M)}$.

By (2.2) and (2.3), all of the balls $B(x_i,r)$ and $B(y_i,r)$ are pairwise disjoint.
Moreover,
\[
g(x_i)=r_i,\quad g(z_i)=0,
\quad
g(y_i)=r_i,\quad g(w_i)=0.
\]
Therefore,
\[
\langle \tilde g,m_{x_i,z_i}\rangle=1,
\qquad
\langle \tilde g,m_{w_i,y_i}\rangle=-1,
\]
and consequently,
\[\|\mu-\nu\|\geq
\langle \tilde g,\mu-\nu\rangle
=\sum_{i=1}^n\lambda_i(1-(-1))=2.
\]

\end{proof}
\begin{rem}
    Small modifications to the previous proof imply that every convex combination of slices of $B_{\mathcal{F}(M)}$ contains a square point (see \cite[Definition 3.1]{MR4787365}). 
    
    We can require that (2.2) and (2.3) hold with $3r_i$. It then follows that we define $\mu,\nu\in \sum \lambda_i S_i$ such that $\|\mu\pm \nu\|=2$ and thus 
    \[
    \left\|\frac{\mu+\nu}{2}+ \frac{\mu-\nu}{2}\right\|=\left\|\frac{\mu+\nu}{2}- \frac{\mu-\nu}{2}\right\|=1.
    \]
\end{rem}
\section*{Acknowledgements}
The author is grateful to his supervisors Rainis Haller  and Andre Ostrak for helpful comments on the paper. The author is thankful to the anonymous referee for helpful feedback.  

This work was supported by the Estonian Research Council grant (PRG1901).


\bibliographystyle{amsplain}
\bibliography{references}

\end{document}